\tikzset{middlearrow/.style n args={4}{
		decoration={
			markings,
			mark=at position #1 with {\arrow{#2},\node[transform shape,#4] {#3};}},postaction={decorate}},
	middlearrow/.default={.5}{>}{}{below}	}
\def\thm@space@setup{%
		\thm@preskip=0.5\abovedisplayskip \thm@postskip=\belowdisplayskip
	}
\theoremstyle{plain}
\newtheorem{thm}{Theorem}
\newtheorem{cor}[thm]{Corollary}
\theoremstyle{definition}
\newcommand{\Z}{\mathbb{Z}}
\DeclareMathOperator{\Out}{Out}
\let\phi\varphi
\title{A hyperbolic free-by-cyclic group determined by its finite quotients}
\author{Naomi Andrew}
\address{Mathematical Institute, Andrew Wiles Building, Observatory Quarter, University of Oxford, Oxford OX2 6GG, United Kingdom}
\email{Naomi.Andrew@maths.ox.ac.uk}
\author{Paige Hillen}
\address{University of California - Santa Barbara Department of Mathematics}
\email{paigehillen@ucsb.edu}
\author{Robert Alonzo Lyman} 
\address{Department of Mathematics, Rutgers University - Newark}
\email{robbie.lyman@rutgers.edu}
\author{Catherine Eva Pfaff}
\address{Institute for Advanced Study \& Queen's University Department of Mathematics \& Statistics}
\email{catherine.pfaff@gmail.com}
\date{}
\begin{document}
	\begin{abstract}
		We show that the group $ \langle a,b,c,t : a^t=b,b^t=c,c^t=ca^{-1} \rangle$ is profinitely rigid amongst free-by-cyclic groups, providing the first example of a hyperbolic free-by-cyclic group with this property.
	\end{abstract}
	
	\maketitle	
	
	All groups considered will be finitely generated and residually finite. Two groups are said to be \emph{profinitely isomorphic} if they share the same set of finite quotients, and a group $G$ is \emph{profinitely rigid} (within a class $\mathcal{C}$) if any group (within $\mathcal{C}$) which is profinitely isomorphic to $G$ is isomorphic to $G$. One can ask which groups within which classes are profinitely rigid, though this question can be subtle even when the class is very restricted: finitely generated abelian groups are profinitely rigid, free groups are profinitely rigid among themselves but it is an open question due to Remeslennikov \cite[Question 5.48]{KourovkaNotebook} whether this holds among finitely generated groups, and there are pairs of non-isomorphic virtually free (even virtually $\Z$!) groups which are profinitely isomorphic \cite{BaumslagNotPR}. 
	
	Grothendieck asked if there are groups which are profinitely isomorphic to one of their proper, non-isomorphic subgroups: Platonov and Tavgen \cite{PlatonovTavgen1986} provide examples of this phenomenon for finitely generated groups, and Bridson and Grunewald \cite[Theorem 1.1]{BridsonGrunewald2004} for finitely presented groups. For a discussion of these facts, as well as a broader introduction to profinite rigidity of groups, see for instance \cite{ReidICMsurvey}.
	
	Here we consider profinite rigidity within the class of free-by-cyclic groups. A free-by-cyclic group is a semidirect product $G:=F_r \rtimes_\varphi \Z$,
  where $F_r$ denotes the free group of finite rank $r$. 
	If two automorphisms $\varphi, \psi$ of $F_r$ represent conjugate or conjugate inverse elements of $\Out(F_r)$ then they define isomorphic free-by-cyclic groups \cite[Lemma 2.1]{BMVrank2}. The converse holds when the abelianisation has rank 1, $b_1(G)=1$ \cite[Theorem 2.4]{BMVrank2}, though not in general \cite[p1678]{BMVrank2}. 
	
	We provide the first known example of a hyperbolic free-by-cyclic group that is profinitely rigid amongst free-by-cyclic groups.
	
	\begin{thm}
		\label{thm:main}
		The group $G \cong \langle a,b,c,t : a^t=b, b^t=c, c^t=ca^{-1} \rangle$ is profinitely rigid amongst free-by-cyclic groups.
	\end{thm}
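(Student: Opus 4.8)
The plan is to read off from $\hat G$ a chain of ever more rigid constraints on the monodromy of any profinitely isomorphic free-by-cyclic group, with the arithmetic of the polynomial $t^{3}-t^{2}+1$ doing the decisive work, and then to descend from a profinite conjugacy to a genuine one in $\Out(F_3)$. Abelianising the presentation forces $a=b=c=0$ and leaves $G^{\mathrm{ab}}\cong\Z$ generated by $t$, so $b_1(G)=1$ and the fibre $F_3=\langle a,b,c\rangle=[G,G]$ is canonical. Since conjugate or conjugate-inverse monodromies define isomorphic mapping tori \cite[Lemma 2.1]{BMVrank2}, it suffices to show that any free-by-cyclic group $H$ profinitely isomorphic to $G$ can be written $H\cong F_3\rtimes_\psi\Z$ with $\psi$ conjugate, or conjugate-inverse, to $\varphi\colon a\mapsto b,\ b\mapsto c,\ c\mapsto ca^{-1}$ in $\Out(F_3)$. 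Fix such an $H\cong F_s\rtimes_\psi\Z$ and an isomorphism $\Phi\colon\hat G\to\hat H$. Since $b_1$ is a profinite invariant, $b_1(H)=1$, so both $\hat G$ and $\hat H$ carry canonical surjections onto $\hat\Z$ (abelianisation modulo torsion) which $\Phi$ matches up to an automorphism of $\hat\Z$; as every such automorphism preserves $m\hat\Z$, the map $\Phi$ restricts to isomorphisms $\widehat{G_m}\cong\widehat{H_m}$, where $G_m=F_3\rtimes_{\varphi^m}\Z$ and $H_m=F_s\rtimes_{\psi^m}\Z$.

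From the tower $(H_m)$ I would next pin down $s$ and the characteristic polynomial of $N:=\psi^{\mathrm{ab}}$. Mod-$p$ first homology of finite-index subgroups is a profinite invariant, and $H_1(H_m;\mathbb{F}_p)\cong\operatorname{coker}\!\big((N^{m}-I)\bmod p\big)\oplus\mathbb{F}_p$ has dimension at most $s+1$, with equality exactly when $N^{m}\equiv I\pmod{p}$; taking the supremum over all $m$ and $p$ and comparing with $G$ gives $s=3$. Since no eigenvalue of $M$ is a root of unity we have $b_1(G_m)=1$ for every $m$, so the torsion subgroup of $H_1(H_m)$ has order $\lvert\det(N^{m}-I)\rvert$, and hence the integer sequence $m\mapsto\lvert\det(N^{m}-I)\rvert$ coincides with the one for $M$. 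A short argument then shows that the Alexander polynomial of $H$ agrees with that of $G$ up to units and the substitution $t\mapsto t^{-1}$, forcing the characteristic polynomial of $N$ to be $f(t):=t^{3}-t^{2}+1$ or its reciprocal $t^{3}-t+1$. Replacing $\psi$ by $\psi^{-1}$ if necessary---which changes neither $H$ nor what we must prove---we may assume $N$ has characteristic polynomial $f$; note that $f$ is irreducible over $\mathbb{Q}$ and not cyclotomic.

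The arithmetic heart of the argument is that $\Z[t]/(f)$ has discriminant $-23$, which is squarefree, so $\Z[t]/(f)$ is the maximal order of the cubic field $K=\mathbb{Q}(\theta)$ of discriminant $-23$; this field has class number one and unit rank one (note $\theta^{2}(1-\theta)=1$, so $M$ is multiplication by a unit of $\mathcal{O}_K$). By the Latimer--MacDuffee correspondence the integer matrices with characteristic polynomial the irreducible $f$ form a single $\GL_3(\Z)$-conjugacy class, so after a suitable change of basis of the fibre of $H$ we may take $N=M$, that is $\psi\varphi^{-1}\in\Torelli(F_3)$. As $f$ is irreducible and none of its roots, nor any ratio of two of them, is a root of unity, every power $N^{m}$ again has irreducible characteristic polynomial with no root-of-unity eigenvalue; hence $N$ and all its powers admit no nonzero proper invariant subspace or direct summand, and a standard homological argument shows that $\psi$ has no periodic conjugacy class and no proper invariant free factor system---that is, $\psi$ is an atoroidal, fully irreducible automorphism. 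In particular $H$ is hyperbolic, as is $G$.

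The remaining step---upgrading this to an honest conjugacy in $\Out(F_3)$---is where the difficulty concentrates. Because $b_1(G)=1$, the closure of the fibre is characteristic in $\hat G\cong\hat H$, so $\Phi$ conjugates $\hat\varphi$ in $\Out(\hat F_3)$ to a $\hat\Z^{\times}$-power of $\hat\psi$, and matching characteristic polynomials (already fixed to be $f$) constrains this power. What is then needed is the implication ``$\hat\psi$ conjugate to $\hat\varphi^{\pm1}$ in $\Out(\hat F_3)$ $\Longrightarrow$ $\psi$ conjugate to $\varphi^{\pm1}$ in $\Out(F_3)$''---an implication which is false for general automorphisms and so must be extracted from the particular $\varphi$. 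Here I would bring to bear the atoroidal fully irreducible dynamics of $\varphi$, whose centraliser in $\Out(F_3)$ is virtually cyclic and whose train-track representative is completely explicit; the cohomological goodness of free-by-cyclic groups, which controls the obstruction to lifting a profinite conjugacy; and a final squeeze on the $\hat\Z^{\times}$-ambiguity coming from $K$ having class number and unit rank one. Granting that these combine as expected, $\psi$ is conjugate or conjugate-inverse to $\varphi$, and so $H\cong G$ by \cite[Lemma 2.1]{BMVrank2}. The whole weight of the proof rests on this final descent; everything preceding it is comparatively routine.
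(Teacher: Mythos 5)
There is a genuine gap, and you have located it yourself: the entire argument funnels into the final paragraph, where you need ``$\hat\psi$ conjugate to $\hat\varphi^{\pm1}$ implies $\psi$ conjugate to $\varphi^{\pm1}$ in $\Out(F_3)$,'' and you offer only a list of ingredients (virtually cyclic centraliser, goodness, class number one) with the phrase ``granting that these combine as expected.'' They do not obviously combine: this descent is exactly the kind of statement that is false in general and that nobody knows how to prove directly even for very explicit atoroidal automorphisms. Everything you establish before that point lives on the level of homology. Latimer--MacDuffee plus $h_K=1$ gives you only that $\psi$ and $\varphi$ induce $\GL_3(\Z)$-conjugate matrices on $H_1(F_3)$, i.e.\ that after a change of basis $\psi\varphi^{-1}$ lies in the Torelli subgroup $\Torelli(F_3)$ --- which is an enormous (infinitely generated as a normal subgroup of nothing helpful, and certainly infinite) group, so this is nowhere near pinning down the conjugacy class of $\psi$ in $\Out(F_3)$. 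No amount of information about characteristic polynomials, Alexander polynomials, or the arithmetic of $\Q(\theta)$ can distinguish automorphisms that differ by Torelli elements, so the ``arithmetic heart'' of your argument cannot be the heart of a correct proof.

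The paper closes this gap by a completely different mechanism that you never invoke: the \emph{stretch factor} of the defining automorphism (the exponential growth rate of cyclically reduced word lengths, not of homology). Hughes and Kudlinska show that for hyperbolic $F_r\rtimes\Z$ with $b_1=1$ the unordered pair of stretch factors of the monodromy and its inverse is a profinite invariant, and Hillen's theorem says that $\psi\colon a\mapsto b,\ b\mapsto c,\ c\mapsto ca^{-1}$ is the \emph{unique} $\Out(F_3)$-conjugacy class of infinite-order irreducible elements realising the minimal stretch factor $\lambda\approx 1.167$. So once one knows the competitor is a hyperbolic $F_3\rtimes\Z$ whose monodromy (or its inverse) is atoroidal, hence irreducible in rank $3$, and has stretch factor $\lambda$, extremality forces conjugacy in $\Out(F_3)$ outright --- no profinite-to-discrete descent is needed. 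If you want to salvage your write-up, the parts worth keeping are the routine reductions ($b_1=1$, rank $3$, atoroidality, hence irreducibility); the substance must be replaced by the stretch-factor invariance and the minimality/uniqueness result, without which the theorem is out of reach by the route you propose.
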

	
	Hughes and Kudlinska \cite{HKprofinite} have shown that if $G$ has $b_1(G)=1$ then in many cases $G$ is \emph{almost} profinitely rigid: it shares a profinite isomorphism class with at most finitely many non-isomorphic free-by-cyclic groups. These cases include when the defining automorphism is irreducible (it has no non-trivial preserved free factor system) and when the rank of the kernel is 3. Building off work of Bridson, Reid and Wilton \cite{BridsonReidWilton2017} they also show that if the kernel has rank 2 then $G$ is profinitely rigid among free-by-cyclic groups. Note that in the rank 2 case $G$ is never hyperbolic.
	
	In a different direction Bridson and Piwek have very recently shown that free-by-cyclic groups with centre \cite[Theorem 1.1]{BridsonPiwek}, or where the cyclic group is instead required to be \emph{finite} cyclic \cite[Theorem 1.2]{BridsonPiwek}, are profinitely rigid among respectively all free-by-(infinite cyclic) groups and free-by-(finite cyclic) groups.
		
	Following the analogy between free-by-cyclic groups and three manifolds, we recall two cognate results on the profinite rigidity of three manifolds. First, the fundamental group of the figure-eight knot complement is profinitely rigid amongst all fundamental groups of three manifolds \cite[Theorem A]{BridsonReid2020}: the proof goes via the identification of this manifold with a once punctured torus bundle over the circle, which algebraically corresponds to some $F_2 \rtimes \Z$. Second, and stronger, the fundamental group of the Weeks manifold (the unique closed orientable hyperbolic three manifold of minimal volume) is profinitely rigid, with no need to restrict the class concerned \cite[Theorem 9.1]{BridsonMcReynoldsReidSpitler2020}.

\subsection*{Proof of the Theorem}	
	The proof will be an application of Hughes and Kudlinska's characterisation of properties detected by the profinite isomorphism class of a free-by-cyclic group (Theorem \ref{thm:hk}), together with Hillen's work (Corollary \ref{cor:paige}) controlling stretch factors of elements of $\Out(F_r)$:

\begin{thm}[Hughes--Kudlinska]
	\label{thm:hk}
	Suppose $G \cong F_r \rtimes \Z$ is a free-by-cyclic group that is hyperbolic and has $b_1(G)=1$. If $H \cong F_s \rtimes \Z$ is profinitely isomorphic to $G$, then \begin{itemize}
		\item \cite[Theorem C]{HKprofinite} $H$ is hyperbolic.
		\item \cite[Theorem B(1)]{HKprofinite} and \cite[Lemma 3.1]{BridsonReid2020} $r=s$.
		\item \cite[end of Theorem B]{HKprofinite} The set of stretch factors $\{\lambda_G^+, \lambda_G^-\}$ associated to the defining outer automorphism of $G$ (and its inverse) agrees with $\{\lambda_H^+, \lambda_H^-\}$.
	\end{itemize}
\end{thm}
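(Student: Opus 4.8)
The plan is to derive all three conclusions from a fixed isomorphism $\widehat G\cong\widehat H$ of profinite completions, extracting each as an invariant of this isomorphism type. First I would recover the fibration. Profinite completion commutes with abelianisation and the free rank of a finitely generated abelian group is visible in its completion, so $b_1$ is a profinite invariant; hence $b_1(H)=b_1(G)=1$, and both $G$ and $H$ admit an epimorphism to $\Z$ that is unique up to sign. These give canonical continuous epimorphisms $\widehat\chi_G\colon\widehat G\to\widehat\Z$ and $\widehat\chi_H\colon\widehat H\to\widehat\Z$. Because the fibre $F_r$ is separable in $G$ and the topology $G$ induces on it is its full profinite topology, the kernel of $\widehat\chi_G$ is the profinite fibre $\widehat{F_r}$, and likewise for $H$. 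As these kernels are characteristic, any isomorphism $\widehat G\cong\widehat H$ carries $\widehat{F_r}$ onto $\widehat{F_s}$ and induces an automorphism of $\widehat\Z$, necessarily multiplication by some unit $u\in\widehat\Z^\times$.

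Next I would read off the rank: from $\widehat{F_r}\cong\widehat{F_s}$ one gets $r=s$, since $H_1(\widehat{F_r};\mathbb{F}_p)\cong\mathbb{F}_p^{\,r}$ recovers $r$; this is the fibre-recognition step, for which I would invoke \cite[Lemma 3.1]{BridsonReid2020}. The remaining data is the conjugation action of a topological generator of $\widehat\Z$ on $\widehat{F_r}$, which encodes $\widehat\varphi$ and hence the action of $\varphi$ on $H_1(\Gamma)$ for every finite-index characteristic subgroup $\Gamma\le F_r$.

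The decisive step is the stretch factors. The plan is to realise $\log\lambda_G^+$ as the growth rate seen across the tower of finite covers of the fibre: for each finite-index characteristic $\Gamma\le F_r$ the automorphism $\varphi$ acts on $H_1(\Gamma;\R)$ with some spectral radius $\rho_\Gamma$, and I would aim to prove that $\sup_\Gamma\rho_\Gamma=\lambda_G^+$, with the analogous statement for $\varphi^{-1}$ giving $\lambda_G^-$. Since $\widehat G\cong\widehat H$ determines the whole system of finite covers together with the monodromy action on their integral homology, this supremum is a profinite invariant. The integrality of those homological actions is what tames the ambiguity $u\in\widehat\Z^\times$: raising the integral spectral data to the $u$-th power must again be integral, which collapses $u$ to $\pm1$ at the level of eigenvalues and explains why it is precisely the unordered pair $\{\lambda_G^+,\lambda_G^-\}$ that is pinned down, matching $\{\lambda_H^+,\lambda_H^-\}$.

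The hard part will be the equality $\sup_\Gamma\rho_\Gamma=\lambda_G^+$, i.e. that the geometric (train-track) stretch factor is actually recovered, not merely bounded, by homological spectral radii over finite covers. The homological stretch factor on $F_r$ itself can be strictly smaller than $\lambda$, and the polynomially growing part of $\varphi$ is invisible to any single abelianisation, so the argument must show that passing to all finite covers exhausts this deficit; I expect this to require (relative) train-track technology together with a McMullen-type spectral argument, and it is the core of \cite[Theorem B]{HKprofinite}. Finally, for hyperbolicity I would use Brinkmann's criterion, that $G$ is hyperbolic exactly when $\varphi$ is atoroidal (no periodic conjugacy class, equivalently no $\Z^2\le G$): a periodic conjugacy class leaves a root-of-unity signature in the same homological data surviving to all finite covers, while its absence together with $\lambda_G^+>1$ certifies atoroidality. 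Establishing that this signature is profinitely detectable, and hence that hyperbolicity transfers from $G$ to $H$, is \cite[Theorem C]{HKprofinite}.
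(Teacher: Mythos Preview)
The paper does not prove this theorem at all. Theorem~\ref{thm:hk} is stated with attribution to Hughes--Kudlinska (and one lemma of Bridson--Reid), each bullet carrying an explicit citation, and it is then used as a black box in the proof of Theorem~\ref{thm:main}. There is no argument in the paper to compare your proposal against; the only accompanying text is the short paragraph recalling the definitions of atoroidal and stretch factor and noting that $b_1$ is a profinite invariant so that $\{\lambda_H^+,\lambda_H^-\}$ is well defined.

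What you have written is a plausible sketch of how the cited results themselves might go, and much of it is in the right spirit: the separability of the fibre and the identification of $\widehat{F_r}$ as the kernel of $\widehat\chi_G$, the recovery of $r$ from $H_1(\widehat{F_r};\mathbb{F}_p)$, and the idea that the stretch factor should be a supremum of homological spectral radii over the tower of characteristic finite-index subgroups are all along the lines of \cite{HKprofinite} and \cite{BridsonReid2020}. But as a contribution to \emph{this} paper it is misplaced: the authors are simply quoting an external theorem, and the appropriate ``proof'' here is the citation. If your intention was to reconstruct Hughes--Kudlinska's arguments, be aware that the hyperbolicity transfer (their Theorem~C) is not established by the root-of-unity signature heuristic you describe; that step needs its own justification and is the least developed part of your outline.
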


We briefly recap the concepts introduced within the theorem. Brinkmann showed that $G$ is hyperbolic if and only if its defining automorphism is \emph{atoroidal:} it has no periodic conjugacy classes \cite[Theorem 1.2]{BrinkmannHyp}. The stretch factor $\lambda$ of an automorphism is defined to be \[ \sup_{w \in F_r} \limsup \sqrt[n]{\| \varphi^n(w) \|},\] where $\|w\|$ is the cyclically reduced word length of the element $w$; it records ``how fast'' elements grow under repeated application of the automorphism, and is an $\Out(F_r)$-conjugacy class invariant. Elements of $\Out(F_r)$ can and often do have different stretch factors from their inverses, so we must record both. Note that the set $\{\lambda_H^+, \lambda_H^-\}$ is well defined since the profinite isomorphism class of a group determines its abelianisation (see for instance \cite[Remark 3.2]{ReidICMsurvey}), and in particular $H$ must also have $b_1(H)=1$ and so there is a unique (up to inverting and $\Out(F_r)$-conjugacy) defining automorphism for $H$.
	
\begin{cor}~{\cite[Corollary 8.1]{PaigeSymmetries}}
	\label{cor:paige}
	The element $\psi$ of $\Out(F_3)$ defined by sending $a \mapsto b, b \mapsto c, c \mapsto ca^{-1}$ defines the unique $\Out(F_3)$-conjugacy class of infinite order irreducible elements realising the minimal stretch factor $\lambda \approx 1.167$.
\end{cor}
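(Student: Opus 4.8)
The plan is to pass to train track representatives and to minimise the Perron--Frobenius eigenvalue over the resulting finite combinatorial landscape. By Bestvina--Handel, any infinite-order irreducible $\phi \in \Out(F_3)$ is carried by an expanding irreducible train track map $f \colon \Gamma \to \Gamma$ on a finite graph with $\pi_1(\Gamma) \cong F_3$ and no valence-one or valence-two vertices, and its stretch factor $\lambda$ is the spectral radius of the primitive non-negative integer transition matrix $M = M(f)$. The Euler-characteristic identity $E - V = 2$ together with the valence condition leaves only finitely many homeomorphism types of $\Gamma$, with between $3$ edges (the rose $R_3$) and $6$ edges (the trivalent graph), so $M$ has size at most $6$. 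I would first record that $\psi$ is \emph{not} a train track map on the rose: its direction map $D\psi$ carries every direction into the orbit of the single fixed direction $c$, so the rose has one gate and every turn is illegal. Folding is therefore forced and the genuine train track representative lives on a larger graph; this is exactly why the target value $\lambda \approx 1.167$ is strictly smaller than the Perron--Frobenius eigenvalue $\approx 1.4656$ (the largest root of $x^3 - x^2 - 1$) of the transition matrix of $\psi$ on the rose, from which cancellation must shave off the difference.

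The heart of the matter is the lower bound $\lambda \ge \lambda_0$, with $\lambda_0 \approx 1.167$. Here I would combine two ingredients. First, a spectral estimate: a primitive integer matrix with spectral radius near $1$ must be large and sparse, essentially a permutation matrix perturbed by a single extra entry, so only finitely many matrices of size $\le 6$ have spectral radius below $\lambda_0 + \varepsilon$, and these can be enumerated. Second, and crucially, a realisability filter: most sparse candidates are not the transition matrix of any expanding irreducible train track map representing a \emph{fully irreducible} automorphism. They are discarded because the map is reducible, because it carries a periodic Nielsen path (obstructing atoroidality and full irreducibility), or because the associated ideal Whitehead graph fails to be connected. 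Invoking the classification of admissible gate structures and ideal Whitehead graphs in rank $3$, I would throw out the inadmissible matrices and compute the spectral radius of each survivor; the minimum attained is $\lambda_0$.

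To finish I would exhibit $\psi$ as a minimiser by explicitly folding its rose representative along the illegal turn $\{b,c\}$ until the map becomes a train track, reading off the graph and matrix whose spectral radius is $\lambda_0$, and then deduce uniqueness from the rigidity of the surviving combinatorial type: the minimum is realised by a single gate-and-turn structure up to the symmetries of $\Gamma$, and two train track maps sharing that structure represent $\Out(F_3)$-conjugate automorphisms. I expect the lower bound to be the main obstacle, precisely because of the realisability filter. The naive minimisation over primitive integer matrices produces spectral radii well below $\lambda_0$, so the real work is certifying that every such matrix either fails to be realised by a topological representative or is realised only by automorphisms that are reducible or not fully irreducible, guaranteeing that no honest infinite-order irreducible element slips in beneath $\lambda_0$.
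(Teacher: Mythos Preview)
This corollary is not proved in the paper at all: it is quoted as Corollary~8.1 of \cite{PaigeSymmetries}, so there is no in-paper argument against which to compare your proposal. Your outline is broadly the standard strategy for such minimal-stretch-factor results---pass to Bestvina--Handel train tracks, bound the number of edges of the underlying rank-$3$ graph, enumerate small primitive integer matrices, and discard those not realised by a genuine automorphism of the required type---and your observation that the given rose map has a single gate (hence is not itself a train track, so that the stretch factor is strictly below the rose eigenvalue $\approx 1.4656$) is correct.

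Two cautions are worth recording. First, the statement concerns infinite-order \emph{irreducible} elements, whereas the machinery you invoke for the realisability filter (ideal Whitehead graphs, periodic Nielsen paths obstructing full irreducibility) is calibrated for \emph{fully} irreducible elements; these notions differ, and the application in the paper only provides that the competing automorphism is atoroidal and hence irreducible in rank~$3$, not a priori fully irreducible. Your plan would need either to establish the minimum over the larger class directly, or to argue separately that any infinite-order irreducible element of $\Out(F_3)$ with stretch factor at most $\lambda_0$ is already fully irreducible. Second, the decisive step---certifying that every sub-$\lambda_0$ matrix fails the realisability filter and that the surviving combinatorial type is unique up to symmetry---is precisely the substance of \cite{PaigeSymmetries}; as written you have a plan that points to that work rather than an independent proof.
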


\begin{proof}[Proof of Theorem~\ref{thm:main}]
First we check that $G$ satisfies the hypotheses of Theorem \ref{thm:hk}. A quick computation with the abelianisation verifies that $b_1(G)=1$, while hyperbolicity follows from Brinkmann's theorem  \cite[Theorem 1.2]{BrinkmannHyp} since the single fold representative has no periodic Nielsen paths so there cannot be a periodic conjugacy class.

Now, suppose $H$ is another free-by-cyclic group that is profinitely isomorphic to $G$. From Theorem~\ref{thm:hk} we know that $H$ is some $F_3 \rtimes \Z$, that it is hyperbolic, and that the defining automorphism and its inverse will have the same stretch factor(s) as $\psi$ and its inverse; in particular one of them must be $\lambda$. Let $\varphi$ be the choice with smaller stretch factor.

It follows from Brinkmann's theorem that $\varphi$ is atoroidal. We also observe, as for instance in the proof of \cite[Corollary F]{HKprofinite}, that if an element of $\Out(F_3)$ is atoroidal then it must be irreducible (the point is that a preserved free factor would have rank 1 or 2, and in either case there is a periodic conjugacy class). 
But then, by Corollary~\ref{cor:paige}, we have that $\varphi$ and $\psi$ are conjugate as elements of $\Out(F_3)$, and so they define isomorphic free-by-cyclic groups.
\end{proof}

\subsection*{Acknowledgements}
We thank Sam Hughes for suggesting the application of our work to profinite rigidity, and for comments on an earlier draft. We are grateful to the Women in Groups, Geometry and Dynamics (WiGGD) program, from which this collaboration arose. This work has received funding from the European Research Council (ERC) under the European Union's Horizon 2020 research and innovation programme (Grant agreement No. 850930), the Royal Society of Great Britain, and from an NSERC Discovery Grant. The fourth author is grateful to the Institute for Advanced Study for their hospitality and Bob Moses for funding her membership. The authors certify that there is no competing interest. 

\bibliographystyle{alpha}
\bibliography{biblio}

\end{document}